\newtheorem{thm}{Theorem}[section]
\newtheorem{cor}[thm]{Corollary}
\theoremstyle{definition}
\newtheorem{den}[thm]{Definition}
\newtheorem{example}[thm]{Example}
\theoremstyle{remark}
\newtheorem{rem}[thm]{Remark}
\numberwithin{equation}{section}
\begin{document}

\title[On approximate Connes-biprojectivity of dual Banach algebras] {On approximate Connes-biprojectivity of dual Banach algebras}
\author[A. Pourabbas]{A. Pourabbas}
\email{arpabbas@aut.ac.ir}

\author[A. Sahami]{A. Sahami}
\address{Department of Mathematics Faculty of Basic Science, Ilam University, P.O. Box 69315-516 Ilam, Iran.}
\email{a.Sahami@ilam.ac.ir}

\author[S. F. Shariati]{S. F. Shariati}

\address{Faculty of Mathematics and Computer Science,
    Amirkabir University of Technology, 424 Hafez Avenue, 15914
    Tehran, Iran.}

\email{f.Shariati@aut.ac.ir}

\subjclass[2010]{Primary: 46H20, 46M10, Secondary: 46H25, 43A10.}

\keywords{approximately Connes-biprojective, Connes-biprojective, Connes amenable, dual Banach algebra}

\begin{abstract}
In this paper, we introduce a notion of approximate Connes-biprojectivity for dual Banach  algebras. We study the relation between approximate Connes-biprojectivity, Johnson pseudo-Connes amenability and $\varphi$-Connes amenability. We propose a criterion to show that some certain dual triangular Banach algebras are not approximately Connes-biprojective. Next we show that for a locally compact group $G$, the Banach algebra  $M(G)$ is approximately Connes-biprojective if and only if $G$ is amenable. 
Finally for an infinite commutative compact group $G$ we show that the Banach algebra $L^2(G)$ with convolution product is approximately Connes-biprojective, but it is not Connes-biprojective.
\end{abstract}

\maketitle

\section{Introduction and Preliminaries}
One of the most important notion in the theory of homological Banach algebras
is biprojectivity which introduced by Helemskii \cite{Hel:89}. Indeed, A Banach algebra $\mathcal{A}$ is called
biprojective, if there exists a bounded $\mathcal{A}$-bimodule morphism $\rho:\mathcal{A}\rightarrow\mathcal{A}\hat{\otimes}\mathcal{A}$, such that $\rho$ is a right inverse
for $\pi_{\mathcal{A}}$, where $\pi_{\mathcal{A}}:\mathcal{A}\hat{\otimes}\mathcal{A}\rightarrow \mathcal{A}$ is the product morphism which is given by $\pi_{\mathcal{A}}(a\otimes b)=ab,$ for every $a,b\in \mathcal{A}$. 

Recently,   approximate homological
notion like approximate biprojectivity and approximate biflatness of Banach algebras have been studied by Zhang \cite{zhan:99}.
Indeed, a Banach algebra $\mathcal{A}$ is called approximately biprojective, if there exists a net $(\rho_{\alpha})$
of continuous $\mathcal{A}$-bimodule morphisms from $\mathcal{A}$ into $\mathcal{A}\hat{\otimes}\mathcal{A}$ such that $\pi_{\mathcal{A}}\circ \rho_{\alpha}(a)\rightarrow a$ for every $a\in\mathcal{A}$. For more  information about approximate biprojectivity of some semigroup algebras, see \cite{sah:2016}.

 There exists a class of Banach algebras which is called dual Banach algebras.  This category of Banach algebras defined by Runde \cite{Runde:2001}. Let $\mathcal{A}$ be a Banach algebra. A Banach $\mathcal{A}$-bimodule  $E$ is called dual if there is a closed submodule ${E}_{\ast}$ of ${E}^{\ast}$ such that $E=(E_{\ast})^{\ast}$. The Banach algebra $\mathcal{A}$ is called dual if it is dual as a Banach $\mathcal{A}$-bimodule. A dual Banach $\mathcal{A}$-bimodule $E$ is normal, if for each $x\in{E}$ the module maps $\mathcal{A}\longrightarrow{E}$; ${a}\mapsto{a}\cdot{x}$ and ${a}\mapsto{x}\cdot{a}$ are $wk^\ast$-$wk^\ast$ continuous. Let $\mathcal{A}$ be a Banach algebra and let $E$ be a Banach $\mathcal{A}$-bimodule. A bounded linear map $D:\mathcal{A}\rightarrow{E} $ is called a bounded derivation if  $D(ab)=a\cdot{D(b)}+D(a)\cdot{b}$ for every $a,b\in\mathcal{A}$. A bounded derivation $D:\mathcal{A}\rightarrow{E}$ is called inner if there exists an element $x$ in $E$ such that $D(a)=a\cdot{x}-x\cdot{a}$ ($a\in{\mathcal{A}}$). A dual Banach algebra $\mathcal{A}$ is called Connes amenable if for every normal dual Banach $\mathcal{A}$-bimodule $E$, every $wk^\ast$-continuous derivation $D:\mathcal{A}\longrightarrow{E}$ is inner. For a given dual Banach algebra $\mathcal{A}$ and a Banach $\mathcal{A}$-bimodule $E$, $\sigma{wc}(E)$ denote the set of all elements $x\in{E}$ such that the module maps $\mathcal{A}\rightarrow{E}$; ${a}\mapsto{a}\cdot{x}$ and ${a}\mapsto{x}\cdot{a}$
are $wk^\ast$-$wk$-continuous, one can see that, it is a closed submodule of $E$, see \cite{Runde:2001} and \cite{Runde:2004} for more details.
Note that, since  $\sigma{wc}(\mathcal{A}_{\ast})=\mathcal{A}_{\ast}$, the adjoint of $\pi_\mathcal{A}$ maps $\mathcal{A}_{\ast}$ into $\sigma{wc}(\mathcal{A}\hat{\otimes}\mathcal{A})^{\ast}$. Therefore, $\pi^{\ast\ast}_\mathcal{A}$ drops to an $\mathcal{A}$-bimodule morphism $\pi_{\sigma{wc}}:(\sigma{wc}(\mathcal{A}\hat{\otimes}\mathcal{A})^{\ast})^{\ast}\longrightarrow\mathcal{A}$. Every element $M\in{(\sigma{wc}(\mathcal{A}\hat{\otimes}\mathcal{A})^{\ast})^*}$ satisfying
\begin{center}
    $a\cdot{M}=M\cdot{a}\quad$ and $\quad{a}\pi_{\sigma{wc}}M=a\quad(a\in{\mathcal{A}})$,
\end{center}
is called a $\sigma{wc}$-virtual diagonal for $\mathcal{A}$. Runde showed that a dual Banach algebra $\mathcal{A}$ is Connes amenable if and only if there exists a $\sigma{wc}$-virtual diagonal for $\mathcal{A}$ \cite[Theorem 4.8]{Runde:2004}.

 A dual Banach algebra $\mathcal{A}$ is called Connes-biprojective if there exists a bounded $\mathcal{A}$-bimodule morphism $\rho:\mathcal{A}\longrightarrow(\sigma{wc}(\mathcal{A}\hat{\otimes}\mathcal{A})^{\ast})^{\ast}$ such that $\pi_{\sigma{wc}}\circ\rho=id_{\mathcal{A}}$ . Shirinkalam and the second author showed that a dual Banach algebra $\mathcal{A}$ is Connes amenable if and only if $\mathcal{A}$ is Connes-biprojective and it has an identity \cite{Shi:2016}.

 Motivated by the definitions of approximate biprojectivity \cite{zhan:99} and  Connes-biprojectivity,
 we introduce a  new class of dual Banach algebras. 
 \begin{den}
    A dual Banach algebra $\mathcal{A}$ is called approximately Connes-biprojective, if there exists a (not necessarily bounded) net $(\rho_\alpha)_\alpha$ of continuous $\mathcal{A}$-bimodule morphisms from $\mathcal{A}$ into $(\sigma wc(\mathcal{A}\hat{\otimes}\mathcal{A})^*)^*$ such that $\pi_{\sigma wc}\circ \rho_{\alpha}(a)\rightarrow a$ for every $a\in\mathcal{A}$.
 \end{den}
It is clear that every Connes-biprojective dual Banach algebra is approximately Connes-biprojective and the same result holds for every approximately biprojective dual Banach algebra.

 In this paper we  study the  notion of approximately Connes-biprojectivity of dual Banach algebras. We show that there exists a relation between this new notion and $\varphi$-Connes amenability. Using this criterion, we  investigate   approximate Connes-biprojectivity of  triangular Banach algebras. We study approximate Connes-biprojectivity  of some dual  Banach algebras associated with locally compact groups. More precisely, we show that for a locally compact group $G$, the measure algebra $M(G)$ is approximately Connes-biprojective if and only if $G$ is amenable. We extend the Example in \cite[\textsection2]{zhan:99} to the approximately Connes-biprojective case and we show that   for an infinite commutative compact group $G$ the Banach algebra $L^2(G)$ with convolution product is approximately Connes-biprojective, but it is not Connes-biprojective.

\section{Approximate  Connes-biprojectivity}
Recently a modificated notion of amenability for Banach algebra (Connes amenability for dual Banach algebra) like Johnson pseudo-contractibility (Johnson pseudo-Connes amenability) introduced \cite{Sahami:2017}(\cite{Sha:17}), respectively. A dual Banach algebra $\mathcal{A}$ is called Johnson pseudo-Connes amenable, if there exists a not necessarily bounded net $(m_{\alpha})$ in $(\mathcal{A}\hat{\otimes}\mathcal{A})^{**}$ such that $\langle{T},a\cdot{m_{\alpha}}\rangle=\langle{T},{m_{\alpha}}\cdot{a}\rangle$ and $i^{\ast}_{\mathcal{A}_{\ast}}\pi_{\mathcal{A}}^{**}(m_{\alpha}){a}\rightarrow{a}$ for every $a\in{\mathcal{A}}$ and $T\in\sigma{wc}(\mathcal{A}\hat{\otimes}\mathcal{A})^{\ast}$, where $i_{\mathcal{A}_{\ast}}:\mathcal{A}_{\ast}\hookrightarrow\mathcal{A}^{\ast}$ is the canonical embedding \cite{Sha:17}.
\begin{rem}\label{R2.1}
	Let $\mathcal{A}$ be a dual Banach algebra and let $X$ be a Banach $\mathcal{A}$-bimodule. Since $\sigma wc(X^*)$ is a closed $\mathcal{A}$-submodule of $X^{*}$, we have a quotient map $q:X^{\ast\ast}\longrightarrow(\sigma{wc}(X^{\ast}))^{\ast}$ defined by $q(u)=u\vert_{\sigma wc(X^*)}$ for every $u\in{X^{**}}$.
\end{rem}
Note that a dual Banach algebra $\mathcal{A}$ has a bounded approximate identity if and only if $\mathcal{A}$ has an identity.
\begin{thm}\label{t2.2}
    Let $\mathcal{A}$ be a dual Banach algebra. Then the following statements hold:
    \begin{enumerate}
        \item [(i)]
        if $\mathcal{A}$ is approximately Connes-biprojective and has an identity, then $\mathcal{A}$ is Johnson pseudo Connes amenable.
        \item[(ii)] if $\mathcal{A}$ is Johnson pseudo Connes amenable, then $\mathcal{A}$ is approximately Connes-biprojective.
    \end{enumerate}
\end{thm}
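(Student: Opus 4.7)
My plan is to bridge the two notions via the quotient map $q\colon (\mathcal{A}\hat{\otimes}\mathcal{A})^{\ast\ast}\longrightarrow(\sigma wc(\mathcal{A}\hat{\otimes}\mathcal{A})^{\ast})^{\ast}$ of Remark \ref{R2.1}. This map is surjective by Hahn--Banach (it is the adjoint of the inclusion $\sigma wc(\mathcal{A}\hat{\otimes}\mathcal{A})^{\ast}\hookrightarrow(\mathcal{A}\hat{\otimes}\mathcal{A})^{\ast}$) and is an $\mathcal{A}$-bimodule morphism because $\sigma wc(\mathcal{A}\hat{\otimes}\mathcal{A})^{\ast}$ is a submodule of $(\mathcal{A}\hat{\otimes}\mathcal{A})^{\ast}$. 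Unwinding the definitions and using that $\pi_{\mathcal{A}}^{\ast}(\mathcal{A}_{\ast})\subseteq\sigma wc(\mathcal{A}\hat{\otimes}\mathcal{A})^{\ast}$, one checks the key identity
\[
\pi_{\sigma wc}\circ q \;=\; i^{\ast}_{\mathcal{A}_{\ast}}\circ\pi_{\mathcal{A}}^{\ast\ast}\colon(\mathcal{A}\hat{\otimes}\mathcal{A})^{\ast\ast}\longrightarrow\mathcal{A}.
\]

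For (i), let $e$ be the identity and $(\rho_{\alpha})$ witness approximate Connes-biprojectivity. Using surjectivity of $q$, I would pick $m_{\alpha}\in(\mathcal{A}\hat{\otimes}\mathcal{A})^{\ast\ast}$ with $q(m_{\alpha})=\rho_{\alpha}(e)$. Since $\rho_{\alpha}$ is a bimodule morphism and $e$ is the identity, $a\cdot\rho_{\alpha}(e)=\rho_{\alpha}(a)=\rho_{\alpha}(e)\cdot a$, and because $q$ is itself a bimodule map this forces $q(a\cdot m_{\alpha})=q(m_{\alpha}\cdot a)$; rewriting this equality as a pairing yields $\langle T,a\cdot m_{\alpha}\rangle=\langle T,m_{\alpha}\cdot a\rangle$ for every $T\in\sigma wc(\mathcal{A}\hat{\otimes}\mathcal{A})^{\ast}$. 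The approximate identity condition then follows from the key identity: $i^{\ast}_{\mathcal{A}_{\ast}}\pi_{\mathcal{A}}^{\ast\ast}(m_{\alpha})=\pi_{\sigma wc}(\rho_{\alpha}(e))\to e$, so multiplying on the right by $a$ gives $i^{\ast}_{\mathcal{A}_{\ast}}\pi_{\mathcal{A}}^{\ast\ast}(m_{\alpha})\,a\to a$.

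For (ii), given a net $(m_{\alpha})$ as in the definition of Johnson pseudo-Connes amenability, I would define $\rho_{\alpha}(a):=a\cdot q(m_{\alpha})$. The first condition on $m_{\alpha}$ translates exactly to $a\cdot q(m_{\alpha})=q(m_{\alpha})\cdot a$ in $(\sigma wc(\mathcal{A}\hat{\otimes}\mathcal{A})^{\ast})^{\ast}$, and this two-sided description makes it immediate that each $\rho_{\alpha}$ is a bounded $\mathcal{A}$-bimodule morphism. Applying the bimodule morphism $\pi_{\sigma wc}$ and the key identity,
\[
\pi_{\sigma wc}\circ\rho_{\alpha}(a)=a\cdot\pi_{\sigma wc}(q(m_{\alpha}))=a\cdot i^{\ast}_{\mathcal{A}_{\ast}}\pi_{\mathcal{A}}^{\ast\ast}(m_{\alpha})=i^{\ast}_{\mathcal{A}_{\ast}}\pi_{\mathcal{A}}^{\ast\ast}(m_{\alpha})\cdot a\longrightarrow a,
\]
where the penultimate equality uses the commutation already derived at the level of $q(m_{\alpha})$ and then pushed down by $\pi_{\sigma wc}$. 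The whole argument is essentially formal once the key identity is in hand; I expect the only subtle point to be keeping the bimodule and adjoint conventions consistent, and the only non-constructive ingredient is the Hahn--Banach lifting in part (i).
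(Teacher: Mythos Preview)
Your proposal is correct and follows essentially the same route as the paper: in both parts you use the quotient map $q$ and the identity $\pi_{\sigma wc}\circ q=i^{\ast}_{\mathcal{A}_{\ast}}\circ\pi_{\mathcal{A}}^{\ast\ast}$ (which the paper cites from \cite[Remark 2.1]{Sha:17}), lifting $\rho_{\alpha}(e)$ through $q$ for (i) and setting $\rho_{\alpha}(a)=a\cdot q(m_{\alpha})$ for (ii). The only cosmetic difference is that the paper writes $\rho_{\alpha}(a)=q(m_{\alpha})\cdot a$ in (ii), which coincides with your definition once the commutation $a\cdot q(m_{\alpha})=q(m_{\alpha})\cdot a$ is established.
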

\begin{proof}
    (i) Let $\mathcal{A}$ be an approximately Connes-biprojective dual Banach algebra. Then there exists a net of bounded $\mathcal{A}$-bimodule morphisms $ (\rho_\alpha)$ such that $\rho_\alpha : \mathcal{A}\rightarrow (\sigma wc(\mathcal{A}\hat{\otimes}\mathcal{A})^*)^*$ and $\pi_{\sigma wc}\circ \rho_{\alpha} (a)\rightarrow a$ for every $a\in{\mathcal{A}}$. Let $m_\alpha=\rho_\alpha (e)$, where $e$ is an identity for $\mathcal{A}$. Consider the net $(M_{\alpha})$ in $(\mathcal{A}\hat{\otimes}\mathcal{A})^{**}$ such that $q(M_{\alpha})=m_\alpha$, where $q:(\mathcal{A}\hat{\otimes}\mathcal{A})^{**}\rightarrow(\sigma wc(\mathcal{A}\hat{\otimes}\mathcal{A})^*)^*$ is a quotient map as in the Remark \ref{R2.1}. For every $a\in{\mathcal{A}}$ and $T\in{\sigma wc(\mathcal{A}\hat{\otimes}\mathcal{A})^*}$ we have
    \begin{equation*}
    \begin{split}
    \langle T,a\cdot M_\alpha-M_\alpha\cdot a\rangle&= \langle T,(a\cdot M_\alpha-M_\alpha\cdot a)\vert_{\sigma{wc}({\mathcal{A}}\hat{\otimes}{\mathcal{A}})^{\ast}}\rangle=\langle T,q(a\cdot M_\alpha-M_\alpha\cdot a)\rangle\\&=\langle T,a\cdot m_\alpha-m_\alpha\cdot a\rangle=\langle T,a\cdot \rho_\alpha (e)-\rho_\alpha (e)\cdot a\rangle\\&=\langle T,\rho_\alpha(a)-\rho_\alpha(a)\rangle=0.
    \end{split}
    \end{equation*}
    Since $i^{\ast}_{\mathcal{A}_{\ast}}\pi^{**}_{\mathcal{A}}=\pi_{\sigma{wc}}\circ q$ \cite[Remark 2.1]{Sha:17},
    \begin{equation*}
i^{\ast}_{\mathcal{A}_{\ast}}\pi_{\mathcal{A}}^{**}(M_\alpha)a=\pi_{\sigma wc}\circ q(M_\alpha)a=\pi_{\sigma wc}(m_\alpha)a=(\pi_{\sigma wc}\circ\rho_{\alpha}(e))a\rightarrow a.
\end{equation*}
So $\mathcal{A}$ is Johnson pseudo Connes amenable.\\
(ii) Suppose that $\mathcal{A}$ is Johnson pseudo Connes amenable. Then there exists a net $(M_{\alpha})$ in $(\mathcal{A}\hat{\otimes}\mathcal{A})^{**}$ such that $\langle{T},a\cdot{M_{\alpha}}\rangle=\langle{T},{M_{\alpha}}\cdot{a}\rangle$ and $i^{\ast}_{\mathcal{A}_{\ast}}\pi_{\mathcal{A}}^{**}(M_{\alpha}){a}\rightarrow{a}$ for every $a\in{\mathcal{A}}$ and $T\in\sigma{wc}(\mathcal{A}\hat{\otimes}\mathcal{A})^{\ast}$. Let $m_\alpha=q(M_{\alpha})$, where $q:(\mathcal{A}\hat{\otimes}\mathcal{A})^{**}\rightarrow(\sigma wc(\mathcal{A}\hat{\otimes}\mathcal{A})^*)^*$ is a quotient map as in the Remark \ref{R2.1}. For every $\alpha$, we define the map
$\rho_\alpha :\mathcal{A}\rightarrow  (\sigma wc(\mathcal{A}\hat{\otimes} \mathcal{A})^*)^*$ by $\rho_\alpha (a)= m_\alpha\cdot a$, for every $a\in{\mathcal{A}}$. Thus for every $ \alpha $ we have
$$\Vert \rho_\alpha (a) \Vert = \Vert   m_\alpha\cdot a \Vert \leq \Vert  m_\alpha  \Vert \Vert a \Vert.$$
So $ \rho_\alpha $ is bounded for every $\alpha$. Since $\langle{T},a\cdot{M_{\alpha}}\rangle=\langle{T},{M_{\alpha}}\cdot{a}\rangle$  for every $T\in\sigma{wc}(\mathcal{A}\hat{\otimes}\mathcal{A})^{\ast}$ and $a\in{\mathcal{A}}$,
\begin{equation*}
a\cdot m_\alpha=q(a\cdot M_\alpha)=a\cdot M_\alpha\vert_{\sigma{wc}({\mathcal{A}}\hat{\otimes}{\mathcal{A}})^{\ast}}=M_\alpha\cdot a\vert_{\sigma{wc}({\mathcal{A}}\hat{\otimes}{\mathcal{A}})^{\ast}}=q(M_\alpha\cdot a)=m_\alpha\cdot a.
\end{equation*}
Thus for every $a,b\in{\mathcal{A}}$
\begin{equation*}
a\cdot\rho_\alpha(b)=a\cdot(m_\alpha\cdot b)=(a\cdot m_\alpha)\cdot b=(m_\alpha\cdot a)\cdot b=m_\alpha\cdot(ab) =\rho_\alpha(ab),
\end{equation*}
and also $ \rho_\alpha(a)\cdot b=\rho_\alpha(ab)$. Hence $\rho_\alpha$ is an $\mathcal{A}$-bimodule morphism.
Since $i^{\ast}_{\mathcal{A}_{\ast}}\pi^{**}_{\mathcal{A}}=\pi_{\sigma{wc}}\circ q$ \cite[Remark 2.1]{Sha:17}, we have
\begin{equation*}
\pi_{\sigma wc}\circ \rho_\alpha (a)=\pi_{\sigma wc}(m_\alpha\cdot a)=\pi_{\sigma wc}(m_\alpha)a=\pi_{\sigma wc}\circ q(M_\alpha)a\rightarrow a\quad(a\in{\mathcal{A}}).
\end{equation*}
So $\mathcal{A}$ is approximately Connes-biprojective.
\end{proof}
 We recall that a Banach algebra $\mathcal{A}$ is left $\varphi$-contractible, where $\varphi$ is a linear multiplication functional on $\mathcal{A}$, if there
 exists $m\in{\mathcal{A}}$ such that $am=\varphi(a)m$ and $\varphi(m)=1$, for every $a\in{\mathcal{A}}$ \cite{Hu:09}. The notion of $\varphi$-Connes amenability for a dual Banach algebra $\mathcal{A}$, where $\varphi$ is a ${wk}^{\ast}$-continuous character on $\mathcal{A}$, was introduced by Mahmoodi  and some characterizations were given \cite{Mahmoodi:2014}. We say that $\mathcal{A}$ is $\varphi$-Connes amenable if there exists a bounded linear functional $m$ on  $\sigma{wc}({\mathcal{A}}^{\ast})$ satisfying $m(\varphi)=1$ and $m(f\cdot{a})=\varphi(a)m(f)$ for every $a\in{\mathcal{A}}$ and $f\in{\sigma{wc}({\mathcal{A}}^{\ast})}$. Ramezanpour showed that the concept of $\varphi$-Connes amenability is equivalent with left $\varphi$-contractible for a dual Banach algebra, where $\varphi$ is a ${wk}^{\ast}$-continuous character \cite[Proposition 2.3]{ram:18}.

 Note that if $\mathcal{A}$ is an approximately Connes-biprojective dual Banach algebra with an identity, then by Theorem \ref{t2.2} (i) and \cite[Proposition 2.6]{Sha:17}, $\mathcal{A}$ is $\varphi$-Connes amenable, where $\varphi$ is a $wk^*$-continuous character. In the following Theorem we show that with the weaker condition than being a unital, $\mathcal{A}$ is left $\varphi$-contractible. Thus $\mathcal{A}$ is $\varphi$-Connes amenable \cite[Proposition 2.3]{ram:18}.
\begin{thm}\label{t2.5}
    Let $\mathcal{A}$ be an approximately Connes-biprojective dual Banach algebra and let $\varphi\in{\Delta_{wk^*}(\mathcal{A})}$ such that $\ker\varphi=\overline{\mathcal{A}\ker\varphi}$. Then $\mathcal{A}$ is left $\varphi$-contractible.
\end{thm}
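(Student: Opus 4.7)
The plan is to produce, directly in $\mathcal{A}$, an element $m$ satisfying $am = \varphi(a) m$ for every $a \in \mathcal{A}$ and $\varphi(m) = 1$, by passing the net $(\rho_\alpha)$ through a carefully chosen slice map into $\mathcal{A}$. First I would fix $a_0 \in \mathcal{A}$ with $\varphi(a_0) = 1$. The slice map $R_\varphi \colon \mathcal{A}\hat{\otimes}\mathcal{A} \to \mathcal{A}$ given by $R_\varphi(a\otimes b) = \varphi(b) a$ is a bounded left $\mathcal{A}$-module morphism and satisfies $R_\varphi(x\cdot c) = \varphi(c)\,R_\varphi(x)$, and its adjoint sends $f\in\mathcal{A}^*$ to $f\otimes\varphi \in (\mathcal{A}\hat{\otimes}\mathcal{A})^*$. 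Using the identities $a\cdot(f\otimes\varphi) = \varphi(a)(f\otimes\varphi)$ and $(f\otimes\varphi)\cdot a = (f\cdot a)\otimes\varphi$, combined with the $wk^*$-continuity of $\varphi$ and the separate $wk^*$-continuity of multiplication in $\mathcal{A}$, I would verify that $R_\varphi^*$ restricts to a bounded map $\mathcal{A}_* \to \sigma wc((\mathcal{A}\hat{\otimes}\mathcal{A})^*)$. Taking the preadjoint then yields a well-defined bounded map
\[
\theta \colon \bigl(\sigma wc(\mathcal{A}\hat{\otimes}\mathcal{A})^*\bigr)^* \to \mathcal{A}, \qquad \langle f, \theta(u)\rangle = \langle f\otimes\varphi, u\rangle \quad (f \in \mathcal{A}_*).
\]

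Dualizing the bimodule identities above gives $\theta(a\cdot u) = a\,\theta(u)$ and $\theta(u\cdot a) = \varphi(a)\,\theta(u)$. Setting $m_\alpha := \theta(\rho_\alpha(a_0))$ and using that $\rho_\alpha$ is a bimodule morphism, one obtains $a m_\alpha = \theta(\rho_\alpha(aa_0))$ for every $a \in \mathcal{A}$, and for every $b \in \mathcal{A}$ and $c \in \ker\varphi$,
\[
\theta(\rho_\alpha(bc)) = \theta(\rho_\alpha(b)\cdot c) = \varphi(c)\,\theta(\rho_\alpha(b)) = 0.
\]
Boundedness of $\theta\circ\rho_\alpha$ and the hypothesis $\ker\varphi = \overline{\mathcal{A}\ker\varphi}$ upgrade this vanishing to all of $\ker\varphi$. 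Applying it to $aa_0 - \varphi(a) a_0 \in \ker\varphi$ gives the exact identity $a m_\alpha = \varphi(a) m_\alpha$, valid for every $a \in \mathcal{A}$ and every index $\alpha$, with no limit involved on the module side.

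For the normalization, the defining property of $\theta$ together with $\varphi\otimes\varphi = \pi_\mathcal{A}^*(\varphi)$ and the adjoint relation $\pi_{\sigma wc} = (\pi_\mathcal{A}^*|_{\mathcal{A}_*})^*$ gives $\varphi(m_\alpha) = \rho_\alpha(a_0)(\varphi\otimes\varphi) = \varphi\bigl(\pi_{\sigma wc}\rho_\alpha(a_0)\bigr)$, which tends to $\varphi(a_0) = 1$ by approximate Connes-biprojectivity. Hence $\varphi(m_\alpha)\neq 0$ for some $\alpha$, and $m := \varphi(m_\alpha)^{-1} m_\alpha$ witnesses left $\varphi$-contractibility. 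The only genuinely nontrivial step is the inclusion $R_\varphi^*(\mathcal{A}_*) \subset \sigma wc((\mathcal{A}\hat{\otimes}\mathcal{A})^*)$; verifying the $wk^*$-$wk$ continuity of $a \mapsto (f\cdot a)\otimes\varphi$ reduces, via pairing with an arbitrary $\xi \in (\mathcal{A}\hat{\otimes}\mathcal{A})^{**}$, to representing the bounded functional $g \mapsto \xi(g\otimes\varphi)$ on $\mathcal{A}_*$ by an element of $(\mathcal{A}_*)^* = \mathcal{A}$, after which continuity follows from $f \in \mathcal{A}_*$ and separate $wk^*$-continuity; the remaining bimodule manipulations are routine once $\theta$ is in place.
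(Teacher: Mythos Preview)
Your proof is correct and rests on the same core ideas as the paper's: the slice map $R_\varphi(a\otimes b)=\varphi(b)a$ satisfies $R_\varphi^*(\mathcal{A}_*)\subset\sigma wc((\mathcal{A}\hat{\otimes}\mathcal{A})^*)$, the induced map $\theta$ into $\mathcal{A}$ enjoys $\theta(a\cdot u)=a\,\theta(u)$ and $\theta(u\cdot a)=\varphi(a)\theta(u)$, the hypothesis $\ker\varphi=\overline{\mathcal{A}\ker\varphi}$ forces $\theta\circ\rho_\alpha$ to vanish on $\ker\varphi$, and $\varphi\circ\theta=\varphi\circ\pi_{\sigma wc}$ yields $\varphi(m_\alpha)\to 1$.

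The paper, however, factors your single map $\theta$ as a composite $\Psi\circ\Theta$ through the intermediate space $(\sigma wc(\mathcal{A}\hat{\otimes}(\mathcal{A}/L))^*)^*$ with $L=\ker\varphi$: first it applies $id_\mathcal{A}\otimes q$ (quotient in the second factor) at the $\sigma wc$ level to obtain $\Theta$, shows that $\eta_\alpha=\Theta\circ\rho_\alpha$ vanishes on $L$ and hence drops to $\mathcal{A}/L$, and only then applies $id_\mathcal{A}\otimes\tilde\varphi$ to land in $\mathcal{A}$ via $\Psi$. Since $(id_\mathcal{A}\otimes\tilde\varphi)\circ(id_\mathcal{A}\otimes q)=R_\varphi$, your $\theta$ is exactly the paper's $\Psi\circ\Theta$ collapsed into one step. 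Your route is more economical, avoiding the quotient bookkeeping and the $wk^*$-density arguments the paper uses to verify identities such as $\varphi\circ\Psi\circ\Theta=\varphi\circ\pi_{\sigma wc}$; the paper's two-stage decomposition separates the role of the essentiality hypothesis (used for $\Theta$) from the $wk^*$-continuity of $\varphi$ (used for $\Psi$), but at the cost of several extra objects.
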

\begin{proof}
    Suppose that $\mathcal{A}$ is approximately Connes-biprojective. Then there exists a net of bounded $\mathcal{A}$-bimodule morphisms $ (\rho_\alpha)$ such that $\rho_\alpha : \mathcal{A}\rightarrow (\sigma wc(\mathcal{A}\hat{\otimes}\mathcal{A})^*)^*$ and $\pi_{\sigma wc}\circ \rho_{\alpha} (a)\rightarrow a$ for every $a\in{\mathcal{A}}$. Let $L=\ker\varphi$. Then $L$ is a closed ideal in $\mathcal{A}$ and also $\frac{\mathcal{A}}{L}$ is a Banach $\mathcal{A}$-bimodule with natural operation. Consider the bounded $\mathcal{A}$-bimodule morphism $id_{\mathcal{A}}\otimes q:\mathcal{A}\hat{\otimes}\mathcal{A}\rightarrow\mathcal{A}\hat{\otimes}\frac{\mathcal{A}}{L}$, where $q:\mathcal{A}\rightarrow\frac{\mathcal{A}}{L}$ is a quotient map. Therefore $(id_{\mathcal{A}}\otimes q)^*$ maps $\sigma wc(\mathcal{A}\hat{\otimes}\frac{\mathcal{A}}{L})^*$ into $\sigma wc(\mathcal{A}\hat{\otimes}\mathcal{A})^*$. Consequently, we obtain a $wk^*$-continuous $\mathcal{A}$-bimodule morphism
    \begin{equation*}
    \Theta:=((id_{\mathcal{A}}\otimes q)^*\vert_{\sigma wc(\mathcal{A}\hat{\otimes}\frac{\mathcal{A}}{L})^*})^*:(\sigma wc(\mathcal{A}\hat{\otimes}\mathcal{A})^*)^*\rightarrow(\sigma wc(\mathcal{A}\hat{\otimes}\frac{\mathcal{A}}{L})^*)^*.
    \end{equation*}
    Define $\eta_{\alpha}=\Theta\circ\rho_{\alpha}$. So $\eta_{\alpha}:\mathcal{A}\rightarrow(\sigma wc(\mathcal{A}\hat{\otimes}\frac{\mathcal{A}}{L})^*)^*$ is an $\mathcal{A}$-bimodule morphism for every $\alpha$. Since $L=\overline{\mathcal{A}L}$, for every $l\in{L}$ there exist two nets $(a_{\gamma})$ in $\mathcal{A}$ and $(l_{\gamma})$ in $L$ such that $l=\lim\limits_{\gamma}a_{\gamma}l_{\gamma}$. So 
    \begin{equation}\label{e2.1}
    \eta_{\alpha}(l)=\Theta\circ\rho_{\alpha}(l)=\lim\limits_{\gamma}\Theta\circ\rho_{\alpha}(a_{\gamma}l_{\gamma})=\lim\limits_{\gamma}\Theta(\rho_{\alpha}(a_{\gamma})\cdot l_{\gamma})\qquad(l\in{L}).
    \end{equation}
    Composing the canonical inclusion
    map $\mathcal{A}\hat{\otimes}\mathcal{A}\hookrightarrow(\mathcal{A}\hat{\otimes}\mathcal{A})^{\ast\ast}$ with the quotient map $q:(\mathcal{A}\hat{\otimes}\mathcal{A})^{\ast\ast}\rightarrow(\sigma\omega{c}({\mathcal{A}}\hat{\otimes}{\mathcal{A}})^{\ast})^{\ast}$ as in the Remark \ref{R2.1}, we obtain a continuous $\mathcal{A}$-bimodule map $\tau:\mathcal{A}\hat{\otimes}\mathcal{A}\longrightarrow(\sigma\omega{c}({\mathcal{A}}\hat{\otimes}{\mathcal{A}})^{\ast})^{\ast}$ which has a $wk^{*}$-dense range. We write $\bar{u}$ instead of $\tau(u)=\hat{u}\vert_{\sigma\omega{c}({\mathcal{A}}\hat{\otimes}{\mathcal{A}})^{\ast}}$ for every $u\in{\mathcal{A}\hat{\otimes}\mathcal{A}}$. Consider $\rho_{\alpha}(a_\gamma)$ as an element in $(\sigma\omega{c}({\mathcal{A}}\hat{\otimes}{\mathcal{A}})^{\ast})^{\ast}$ for every $\gamma$. So there exists a net $(u_{\beta}^\gamma)$ in $\mathcal{A}\hat{\otimes}\mathcal{A}$ such that $\rho_{\alpha}(a_\gamma)=wk^*\hbox{-}\lim\limits_{\beta}\overline{u_{\beta}^\gamma}$. Since $\Theta$ is $wk^*$-continuous, (\ref{e2.1}) implies that
    \begin{equation}\label{eq}
\eta_{\alpha}(l)=\lim\limits_{\gamma}\Theta((wk^*\hbox{-}\lim\limits_{\beta}\overline{u_{\beta}^\gamma})\cdot l_{\gamma})=\lim\limits_{\gamma}\Theta(wk^*\hbox{-}\lim\limits_{\beta}(\overline{u_{\beta}^\gamma\cdot l_{\gamma}}))=\lim\limits_{\gamma}wk^*\hbox{-}\lim\limits_{\beta}\Theta(\overline{u_{\beta}^\gamma\cdot l_{\gamma}}).
    \end{equation}
One can see that
\begin{equation}\label{e222}
\Theta(\bar{u})=(\widehat{id_{\mathcal{A}}\otimes q(u)})\vert_{\sigma wc({\mathcal{A}}\hat{\otimes}\frac{\mathcal{A}}{L})^*}\qquad(u\in{{\mathcal{A}}\hat{\otimes}{\mathcal{A}}}).
\end{equation}
Since for every $l\in{L}$, $q(l)=0$, $id_{\mathcal{A}}\otimes q(u_{\beta}^\gamma\cdot l_{\gamma})=0$. By (\ref{eq}) and (\ref{e222}), $\eta_{\alpha}(l)=0$. Thus $\eta_{\alpha}$ can be dropped on $\frac{\mathcal{A}}{L}$, for every $\alpha$. So we can see that $\eta_{\alpha}:\frac{\mathcal{A}}{L}\rightarrow(\sigma wc(\mathcal{A}\hat{\otimes}\frac{\mathcal{A}}{L})^*)^*$ is an $\mathcal{A}$-bimodule morphism. Define a character $\tilde{\varphi}$ on $\frac{\mathcal{A}}{L}$ by $\tilde{\varphi}(a+L)=\varphi(a)$ for every $a\in{\mathcal{A}}$. Consider a left $\mathcal{A}$-module morphism $id_{\mathcal{A}}\otimes\tilde{\varphi}:\mathcal{A}\hat{\otimes}\frac{\mathcal{A}}{L}\rightarrow\mathcal{A}$; $a\otimes(b+L)\mapsto\varphi(b)a$. For every $f\in{\mathcal{A}^*}$ and $a\in{\mathcal{A}}$ we have
\begin{equation}\label{e2.2}
(id_{\mathcal{A}}\otimes\tilde{\varphi})^*(f)\cdot a=(id_{\mathcal{A}}\otimes\tilde{\varphi})^*(f\cdot a),\quad a\cdot(id_{\mathcal{A}}\otimes\tilde{\varphi})^*( f)=\varphi(a)(id_{\mathcal{A}}\otimes\tilde{\varphi})^*(f).
\end{equation}
Since $\varphi$ is $wk^*$-continuous and also $\sigma wc(\mathcal{A}_*)=\mathcal{A}_*$, by (\ref{e2.2}), $(id_{\mathcal{A}}\otimes\tilde{\varphi})^*(\mathcal{A}_*)\subseteq\sigma wc({\mathcal{A}}\hat{\otimes}\frac{\mathcal{A}}{L})^*$. So we obtain a $wk^*$-continuous left $\mathcal{A}$-module morphism
\begin{equation*}
\Psi:=((id_{\mathcal{A}}\otimes \tilde{\varphi})^*\vert_{\mathcal{A}_*})^*:(\sigma wc(\mathcal{A}\hat{\otimes}\frac{\mathcal{A}}{L})^*)^*\rightarrow\mathcal{A}.
\end{equation*}
Define $\gamma_{\alpha}=\Psi\circ\eta_{\alpha}$. So $\gamma_{\alpha}:\frac{\mathcal{A}}{L}\rightarrow\mathcal{A}$ is a left $\mathcal{A}$-module morphism for every $\alpha$. Note that $\gamma_{\alpha}$ is a net of non-zero maps. To see this first we show that
\begin{equation}\label{2.4}
 \varphi\circ\Psi\circ\Theta=\varphi\circ\pi_{\sigma wc}.
 \end{equation}
  For every $a,b\in{\mathcal{A}}$ we have
  \begin{equation*}
 \begin{split}
 \varphi\circ(id_{\mathcal{A}}\otimes\tilde{\varphi})\circ (id_{\mathcal{A}}\otimes q)(a\otimes b)&=\varphi\circ(id_{\mathcal{A}}\otimes\tilde{\varphi})(a\otimes b+L)=\varphi(a\varphi(b))\\&=\varphi(a)\varphi(b)=\varphi(ab)=\varphi\circ\pi(a\otimes b).
 \end{split}
  \end{equation*}
  So 
  \begin{equation}\label{e2.6}
 \varphi\circ (id_{\mathcal{A}}\otimes\tilde{\varphi})\circ (id_{\mathcal{A}}\otimes q)=\varphi\circ\pi.
  \end{equation}
   One can see that 
  \begin{equation}\label{e2.7}
  \Psi(\hat{v}\vert_{\sigma wc(\mathcal{A}\hat{\otimes}\frac{\mathcal{A}}{L})^*})=id_{\mathcal{A}}\otimes\tilde{\varphi}(v)\qquad(v\in{\mathcal{A}\hat{\otimes}\frac{\mathcal{A}}{L}}),
  \end{equation}
 and also 
 \begin{equation}\label{e2.8}
\pi_{\sigma wc}(\bar{u})=\pi(u)\qquad(u\in{\mathcal{A}\hat{\otimes}\mathcal{A}}).
 \end{equation}
Now for every $m\in{(\sigma wc(\mathcal{A}\hat{\otimes}\mathcal{A})^*)^*}$, there exists a net $(u_{\alpha})$ in $\mathcal{A}\hat{\otimes}\mathcal{A}$ such that $m=wk^*\hbox{-}\lim\limits_{\alpha}\bar{u}_{\alpha}$. Since the maps $\varphi$, $\Theta$, $\Psi$ and $\pi_{\sigma wc}$ are $wk^*$-continuous, (\ref{e222}), (\ref{e2.6}), (\ref{e2.7}) and (\ref{e2.8}) imply that
\begin{equation*}
\begin{split}
\varphi\circ\Psi\circ\Theta(m)&=\varphi\circ\Psi\circ\Theta(wk^*\hbox{-}\lim\limits_{\alpha}\bar{u}_{\alpha})=wk^*\hbox{-}\lim\limits_{\alpha}\varphi\circ\Psi\circ\Theta(\bar{u}_{\alpha})\\&=wk^*\hbox{-}\lim\limits_{\alpha}\varphi\circ (id_{\mathcal{A}}\otimes\tilde{\varphi})\circ (id_{\mathcal{A}}\otimes q)(u_{\alpha})=wk^*\hbox{-}\lim\limits_{\alpha}\varphi\circ\pi(u_{\alpha})\\&=wk^*\hbox{-}\lim\limits_{\alpha}\varphi\circ\pi_{\sigma wc}(\bar{u}_{\alpha})=\varphi\circ\pi_{\sigma wc}(m).
\end{split}
\end{equation*}
So by (\ref{2.4}) for every $x\in{\mathcal{A}}$
\begin{equation*}
\varphi\circ\gamma_{\alpha}(x+L)=\varphi\circ\Psi\circ\eta_{\alpha}(x+L)=\varphi\circ\Psi\circ\Theta\circ\rho_{\alpha}(x)=\varphi\circ\pi_{\sigma wc}\circ\rho_{\alpha}(x)\rightarrow\varphi(x)\neq0.
\end{equation*}
Choose $x_0\in{\mathcal{A}}$ such that $\varphi(x_0)=1$. Let $m_{\alpha}=\gamma_{\alpha}(x_{0}+L)$. So $(m_{\alpha})$ is a net in $\mathcal{A}$. By similar argument as in \cite[Theorem 3.9]{sah:2016},
\begin{equation*}
am_{\alpha}=\varphi(a)m_{\alpha}\qquad\hbox{and}\qquad\varphi(m_{\alpha})=1.
\end{equation*}
 Hence $\mathcal{A}$ is left $\varphi$-contractible.
\end{proof}
\begin{example}
    Consider the Banach algebra $\ell^1$ of all sequences $a=(a_n)$ of complex numbers with
    \begin{equation*}
    \Vert{a}\Vert=\sum\limits_{n=1}^\infty\vert a_n\vert<\infty,
    \end{equation*}
    and the following product
    \begin{equation*}
    (a\ast b)(n)=\left\{
    \begin{array}{ll}
    a(1)b(1)& \hbox{if}\quad n=1\\
    a(1)b(n)+b(1)a(n)+a(n)b(n)&\hbox{if}\quad n>1
    \end{array}
    \right.
    \end{equation*}
    for every $a,b\in{\ell^1}$. By \cite[Example 4.1]{Sha:17}, $\ell^1$ is not Johnson pseudo-Connes amenable. Since $\ell^1$ is unital, Theorem \ref{t2.2} (i) implies that $\ell^1$ is not approximately Connes-biprojective.
\end{example}

\section{Application to triangular Banach algebras}
Let $\mathcal{A}$ and $\mathcal{B}$ be  Banach algebras and let $X$ be a Banach
$\mathcal{A},\mathcal{B}$-module. That is, $X$ is a Banach left
$\mathcal{A}$-module and a Banach right $\mathcal{B}$-module that satisfy $(a\cdot x)\cdot b=a\cdot(x\cdot b)$ and $||a\cdot x\cdot b||\leq ||a||||x||||b||$ for every $a\in \mathcal{A}$, $b\in \mathcal{B}$ and $x\in X$.
Consider

$$Tri(\mathcal{A},\mathcal{B},X)=\left(\begin{array}{cc} \mathcal{A}&X\\
0&\mathcal{B}\\
\end{array}\right),$$
with the usual matrix operations and  $$||\left(\begin{array}{cc} a&x\\
0&b\\
\end{array}\right)||=||a||+||x||+||b||\quad(a\in \mathcal{A},x\in X, b\in \mathcal{B}),$$  $Tri(\mathcal{A},\mathcal{B},X)$ becomes a Banach algebra which is called a triangular Banach algebra.\\
Note that if $\mathcal{A}$ is a dual Banach algebra, then $Tri(\mathcal{A},\mathcal{A},\mathcal{A})$ is a dual Banach algebra with respect to the predual $\mathcal{A}_*{\oplus}_{\infty}\mathcal{A}_*{\oplus}_\infty\mathcal{A}_*$.
\begin{thm}\label{T2.7}
    Let $\mathcal{A}$ be a dual Banach algebra with a left approximate identity and let $\varphi\in{\Delta_{wk^*}(\mathcal{A})}$. Then $Tri(\mathcal{A},\mathcal{A},\mathcal{A})$ is not approximately Connes-biprojective.
\end{thm}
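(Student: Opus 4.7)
The plan is to apply Theorem~\ref{t2.5} with a carefully chosen $wk^*$-continuous character on $Tri(\mathcal{A},\mathcal{A},\mathcal{A})$. Writing a typical element of the triangular algebra as $T=\left(\begin{smallmatrix}a&x\\0&b\end{smallmatrix}\right)$, I would define $\psi(T):=\varphi(b)$. This is a nonzero character because $\varphi(b_1b_2)=\varphi(b_1)\varphi(b_2)$, and it is $wk^*$-continuous because $\varphi\in\mathcal{A}_*$ and the predual $\mathcal{A}_*\oplus_\infty\mathcal{A}_*\oplus_\infty\mathcal{A}_*$ identifies $\psi$ with $(0,0,\varphi)$. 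The hypothesis $\ker\psi=\overline{Tri(\mathcal{A},\mathcal{A},\mathcal{A})\cdot\ker\psi}$ holds because $\left(\begin{smallmatrix}e_\alpha&0\\0&e_\alpha\end{smallmatrix}\right)$ is a left approximate identity for $Tri(\mathcal{A},\mathcal{A},\mathcal{A})$, so for each $T\in\ker\psi$ we have $\left(\begin{smallmatrix}e_\alpha&0\\0&e_\alpha\end{smallmatrix}\right)T\to T$ with the right-hand factor $T$ still in $\ker\psi$.

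I would then argue by contradiction. If $Tri(\mathcal{A},\mathcal{A},\mathcal{A})$ were approximately Connes-biprojective, Theorem~\ref{t2.5} would yield an element $M=\left(\begin{smallmatrix}m_1&m_2\\0&m_3\end{smallmatrix}\right)$ with $TM=\psi(T)M$ for every $T$ and $\psi(M)=\varphi(m_3)=1$. Matching the $(1,1)$, $(1,2)$, and $(2,2)$ entries of the matrix equation $TM=\psi(T)M$ gives
\begin{equation*}
am_1=\varphi(b)m_1,\qquad am_2+xm_3=\varphi(b)m_2,\qquad bm_3=\varphi(b)m_3\qquad(a,b,x\in\mathcal{A}).
\end{equation*}
Picking $b_0\in\mathcal{A}$ with $\varphi(b_0)=1$ and specializing $a=0,b=b_0$ in the first equation forces $m_1=0$; the choice $a=x=0,b=b_0$ in the second equation forces $m_2=0$; substituting $m_2=0,a=0$ back into the second equation reduces it to $xm_3=0$ for every $x\in\mathcal{A}$, which combined with the left approximate identity yields $m_3=0$. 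This contradicts $\varphi(m_3)=1$.

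The main design choice is the placement of $\varphi$: the seemingly symmetric option $\psi(T):=\varphi(a)$ is also a $wk^*$-continuous character satisfying the kernel condition, but running the same computation with it only forces $m_3=0$ and leaves $m_1$ to witness left $\varphi$-contractibility of $\mathcal{A}$, which the hypotheses do not exclude. Putting $\varphi$ on the lower-right diagonal slot is what causes the off-diagonal coupling $xm_3$ in the $(1,2)$-entry to cascade once $m_1$ and $m_2$ have been killed, and so collapse the remaining component $m_3$ as well. Apart from this choice of character, the proof is a routine matrix calculation plus one application of Theorem~\ref{t2.5}.
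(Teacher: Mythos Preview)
Your proof is correct and follows the same overall strategy as the paper: define the character $\psi(T)=\varphi(b)$ on the lower-right entry, verify the kernel condition via the diagonal left approximate identity, apply Theorem~\ref{t2.5} to obtain left $\psi$-contractibility, and derive a contradiction by forcing the witnessing element to vanish.

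The one genuine difference is in how the contradiction is extracted. The paper first restricts to the closed ideal $I=\left(\begin{smallmatrix}0&\mathcal{A}\\0&\mathcal{A}\end{smallmatrix}\right)$, invokes \cite[Proposition~3.8]{Nasr:11} to conclude that $I$ inherits left $\psi_\varphi$-contractibility (since $\psi_\varphi|_I\neq 0$), and then works with an element of $I$ to reach $j=0$. You instead keep the full $3\times 3$ block of unknowns $m_1,m_2,m_3$ and kill them one at a time by specializing the matrix identity $TM=\psi(T)M$. Your route is slightly more elementary in that it avoids the external hereditary lemma, at the cost of tracking two extra entries; the paper's route is cleaner once that lemma is available, since passing to $I$ immediately zeroes the first column. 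Either way the decisive step is the same: the $(1,2)$-entry equation couples $m_3$ to an arbitrary left multiplier $x$, and the left approximate identity then forces $m_3=0$, contradicting $\varphi(m_3)=1$. Your closing remark about why $\psi(T)=\varphi(a)$ would fail is a nice diagnostic and matches the asymmetry exploited in the paper.
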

\begin{proof}
     Suppose conversely that $Tri(\mathcal{A},\mathcal{A},\mathcal{A})$ is approximately Connes-biprojective. Let $(e_{\alpha})$ be a left approximate identity for $\mathcal{A}$. Then $(\left(\begin{array}{cc} e_{\alpha}&0\\
     0&e_{\alpha}\\
     \end{array}
     \right))$ is a left approximate identity for $Tri(\mathcal{A},\mathcal{A},\mathcal{A})$. Define a character $\psi_{\varphi}$ on $Tri(\mathcal{A},\mathcal{A},\mathcal{A})$ by $\psi_{\varphi}(\left(\begin{array}{cc} a&b\\
     0&c\\
     \end{array}
     \right))=\varphi(c)$, for every $a,b$ and $c$ in $\mathcal{A}$. The $wk^*$-continuity of $\varphi$ implies that $\psi_{\varphi}$ is $wk^*$-continuous. Since $Tri(\mathcal{A},\mathcal{A},\mathcal{A})$ has a left approximate identity, it is easy to see that $\ker\psi_{\varphi}=\overline{Tri(\mathcal{A},\mathcal{A},\mathcal{A})\ker\psi_{\varphi}}$. Using Theorem \ref{t2.5}, $Tri(\mathcal{A},\mathcal{A},\mathcal{A})$ is left $\psi_{\varphi}$-contractible.
     
      Consider the closed ideal $I=\left(\begin{array}{cc} 0&\mathcal{A}\\
     0&\mathcal{A}\\
     \end{array}
     \right)$ in $Tri(\mathcal{A},\mathcal{A},\mathcal{A})$. We have $\psi_{\varphi}\vert_I\neq0$. So $I$ is left $\psi_{\varphi}$-contractible \cite[Proposition 3.8]{Nasr:11}. Thus there exists $\left(\begin{array}{cc} 0&i\\
     0&j\\
     \end{array}
     \right)\in{I}$ such that
     \begin{equation}\label{eq3.1}
  \left(\begin{array}{cc} 0&a\\
  0&b\\
  \end{array}
  \right)   \left(\begin{array}{cc} 0&i\\
     0&j\\
     \end{array}
     \right)=\psi_{\varphi}(\left(\begin{array}{cc} 0&a\\
     0&b\\
     \end{array}
     \right))\left(\begin{array}{cc} 0&i\\
     0&j\\
     \end{array}
     \right)=\varphi(b)\left(\begin{array}{cc} 0&i\\
     0&j\\
     \end{array}
     \right)\quad(a,b\in{\mathcal{A}}),
     \end{equation}
     and
     \begin{equation}\label{eq3.2}
     \psi_{\varphi}(\left(\begin{array}{cc} 0&i\\
     0&j\\
     \end{array}
     \right))=\varphi(j)=1.
     \end{equation}
     Choose $b\in{\mathcal{A}}$ such that $\varphi(b)=0$. (\ref{eq3.1}) implies that for every $\alpha$
     \begin{equation*}
     \left(\begin{array}{cc} 0&e_{\alpha}j\\
     0&bj\\
     \end{array}
     \right)=\left(\begin{array}{cc} 0&e_{\alpha}\\
     0&b\\
     \end{array}
     \right)\left(\begin{array}{cc} 0&i\\
     0&j\\
     \end{array}
     \right)=\varphi(b)\left(\begin{array}{cc} 0&i\\
     0&j\\
     \end{array}
     \right)=0.
     \end{equation*}
   Then $e_{\alpha}j=0$ for every $\alpha$. Since $j=\lim\limits_{\alpha}e_{\alpha}j$, we have $j=0$ which is a contradiction with (\ref{eq3.2}).
\end{proof}
Let $S$ be the set  of natural numbers $\mathbb{N}$ with the binary operation $(m,n)\longmapsto\max\{m,n\}$, where $m$ and $n$ are in $\mathbb{N}$. Then $S$ is a weakly cancellative semigroup, that is, for every $s,t\in{S}$ the set $\{x\in{S}:sx=t\}$ is finite \cite[Example 3.36]{Dales:2010}. So $\ell^1(S)$ is a dual Banach algebra with predual $c_{0}(S)$ \cite[Theorem 4.6]{Dales:2010}.
\begin{cor}
Let $S=(\mathbb{N},\max)$. Then $Tri(\ell^{1}(S),\ell^{1}(S),\ell^{1}(S))$ is not approximately Connes-biprojective.
    \end{cor}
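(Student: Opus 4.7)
The plan is to deduce this corollary immediately from Theorem \ref{T2.7} by verifying its three hypotheses for $\mathcal{A}=\ell^1(S)$ with $S=(\mathbb{N},\max)$: $\mathcal{A}$ is a dual Banach algebra, $\mathcal{A}$ has a left approximate identity, and $\Delta_{wk^*}(\mathcal{A})$ is non-empty.

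First, that $\ell^1(S)$ is a dual Banach algebra with predual $c_0(S)$ is exactly the statement recorded in the paragraph preceding the corollary, invoking weak cancellativity of $(\mathbb{N},\max)$ and \cite[Theorem 4.6]{Dales:2010}. Second, I would observe that $\delta_1$ is in fact a (two-sided) unit for $\ell^1(S)$, since $\delta_1\ast\delta_n=\delta_{\max(1,n)}=\delta_n$ for every $n\in\mathbb{N}$; in particular $\ell^1(S)$ trivially admits a left approximate identity.

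Third, I need a $wk^*$-continuous character on $\ell^1(S)$. I would use the standard correspondence between characters on $\ell^1(S)$ and bounded semicharacters $\hat\varphi:S\to\mathbb{C}$, and recall that $wk^*$-continuity of the induced functional on $\ell^1(S)=c_0(S)^*$ is equivalent to $\hat\varphi\in c_0(S)$. For the semigroup $(\mathbb{N},\max)$ the identity $\hat\varphi(n)^2=\hat\varphi(\max(n,n))=\hat\varphi(n)$ forces $\hat\varphi$ to be $\{0,1\}$-valued, and the relation $\hat\varphi(n)=\hat\varphi(m)\hat\varphi(n)$ for $m\le n$ shows that the support of $\hat\varphi$ is a downward closed subset of $\mathbb{N}$. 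Hence the characteristic function $\hat\varphi=\chi_{\{1\}}$ (or more generally $\chi_{\{1,\dots,k\}}$ for any $k$) is a semicharacter lying in $c_0(S)$, which furnishes the desired element $\varphi\in\Delta_{wk^*}(\ell^1(S))$.

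With all three hypotheses in place, Theorem \ref{T2.7} applied to $\mathcal{A}=\ell^1(S)$ with this $\varphi$ yields the non-approximate-Connes-biprojectivity of $Tri(\ell^1(S),\ell^1(S),\ell^1(S))$. There is no real obstacle here; the only point requiring attention is the verification of $wk^*$-continuity of the character, which reduces to the elementary semigroup-theoretic observation that semicharacters of $(\mathbb{N},\max)$ are indicators of initial segments and can therefore be chosen to have finite (in particular $c_0$) support.
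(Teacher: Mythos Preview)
Your proposal is correct and follows essentially the same approach as the paper: both verify the hypotheses of Theorem~\ref{T2.7} by noting that $\delta_1$ is a unit for $\ell^1(S)$ and by exhibiting a $wk^*$-continuous character, namely the one induced by the semicharacter $\chi_{\{1,\dots,n_0\}}$ (the paper writes this directly as $\varphi_{n_0}(\sum a_i\delta_i)=\sum_{i=1}^{n_0}a_i$ and checks $wk^*$-continuity by pairing with $g=\sum_{i=1}^{n_0}\delta_i\in c_0(S)$). Your derivation via the classification of semicharacters of $(\mathbb{N},\max)$ is a slightly more systematic route to the same character, but the content is identical.
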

\begin{proof}
    Since $\ell^1(S)$ has a unit $\delta_{1}$, $Tri(\ell^{1}(S),\ell^{1}(S),\ell^{1}(S))$ has a unit. Fixed $n_{0}\in{\mathbb{N}}$. Consider the character $\varphi_{n_{0}}:\ell^1(S)\rightarrow\mathbb{C}$ defined by $\varphi_{n_{0}}(\sum\limits_{i=1}^\infty a_{i}\delta_{i})=\sum\limits_{i=1}^{n_0}a_{i}$. We claim that $\varphi_{n_{0}}$ is $wk^*$-continuous. To see this suppose that $(x_\alpha)=(\sum\limits_{i=1}^\infty a^\alpha_{i}\delta_{i})$ is a net in $\ell^1(S)$ and $x_0=\sum\limits_{i=1}^\infty b_{i}\delta_{i}$ is an element in $\ell^1(S)$ such that $x_\alpha\overset{wk^*}{\longrightarrow}x_0$. Let $g=\sum\limits_{i=1}^{n_0}\delta_{i}$. It is clear that $g\in{c_0(S)}$. So $x_\alpha(g)\longrightarrow x_0(g)$. It implies that $\sum\limits_{i=1}^{n_0}a^\alpha_{i}\longrightarrow\sum\limits_{i=1}^{n_0}b_{i}$. Thus $\varphi_{n_{0}}(x_\alpha)\longrightarrow\varphi_{n_{0}}(x_0)$. By Theorem \ref{T2.7}, $Tri(\ell^{1}(S),\ell^{1}(S),\ell^{1}(S))$ is not approximately Connes-biprojective.
\end{proof}
Let $\mathcal{A}$ be a Banach algebra and let $E$ be a Banach $\mathcal{A}$-bimodule. An element $x\in{E}$ is called weakly almost periodic if the module maps $\mathcal{A}\longrightarrow{E}$; $a\mapsto{a\cdot{x}}$ and $a\mapsto{x\cdot{a}}$ are weakly compact. The set of all weakly almost periodic elements of $E$ is denoted by $WAP(E)$ which is a norm closed sub-bimodule of $E$ \cite[Definition 4.1]{Runde:2004}.
For a Banach algebra $\mathcal{A}$, Runde observed that $F(\mathcal{A})=WAP(\mathcal{A}^{\ast})^{\ast}$ is a dual Banach algebra with the first Arens product inherited from $\mathcal{A}^{\ast\ast}$. He also showed that $F(\mathcal{A})$ is a canonical dual Banach algebra associated to $\mathcal{A}$ \cite[Theorem 4.10]{Runde:2004}.

We recall that if $E$ is a Banach $\mathcal{A}$-bimodule, then $E^*$ is also a Banach $\mathcal{A}$-bimodule via the following actions
\begin{equation*}
(a\cdot f)(x)=f(x\cdot a),\qquad(f\cdot a)(x)=f(a\cdot x)\quad(a\in{\mathcal{A}},x\in{E},f\in{E^*}).
\end{equation*}
\begin{cor}
Let $S=(\mathbb{N},\min)$. Then $Tri(F(\ell^{1}(S)),F(\ell^{1}(S)),F(\ell^{1}(S)))$ is not approximately Connes-biprojective.
\end{cor}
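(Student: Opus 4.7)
My plan is to invoke Theorem~\ref{T2.7} with $\mathcal{A} = F(\ell^{1}(S))$. The ambient dual Banach algebra hypothesis is immediate because, by \cite[Theorem 4.10]{Runde:2004}, $F(\ell^{1}(S))$ is the canonical dual Banach algebra associated to $\ell^{1}(S)$, with predual $WAP(\ell^{1}(S)^{*})$. Hence it suffices to exhibit (i) a $wk^{*}$-continuous character on $F(\ell^{1}(S))$ and (ii) a left approximate identity for $F(\ell^{1}(S))$.

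For (i), I would begin with the augmentation $\varphi_{0}\in\ell^{1}(S)^{*}$ given by $\varphi_{0}(a)=\sum_{n}a(n)$. Using $\delta_{m}\ast\delta_{n}=\delta_{\min(m,n)}$, a routine computation shows that $\varphi_{0}$ is a character on $\ell^{1}(S)$. From $(a\cdot\varphi_{0})(b)=\varphi_{0}(ba)=\varphi_{0}(a)\varphi_{0}(b)$ one sees that both module actions on $\varphi_{0}$ are of rank one and hence weakly compact, so $\varphi_{0}\in WAP(\ell^{1}(S)^{*})$. Evaluation at $\varphi_{0}$ then produces a functional $\hat{\varphi}_{0}$ on $F(\ell^{1}(S))=WAP(\ell^{1}(S)^{*})^{*}$ that is $wk^{*}$-continuous by construction, and standard first-Arens-product arguments (lifting $\varphi_{0}$ to a character on $\ell^{1}(S)^{**}$ and then descending through the quotient to $F(\ell^{1}(S))$) give its multiplicativity.

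For (ii), I would verify that $(\delta_{N})_{N\in\mathbb{N}}$ is a bounded left approximate identity for $\ell^{1}(S)$: the identity $\delta_{N}\ast a=\sum_{n\leq N}a(n)\delta_{n}+\bigl(\sum_{n>N}a(n)\bigr)\delta_{N}$ gives $\|\delta_{N}\ast a-a\|\leq 2\sum_{n>N}|a(n)|\to 0$. Since $\|\delta_{N}\|=1$, Banach--Alaoglu furnishes a $wk^{*}$-cluster point $E\in\ell^{1}(S)^{**}$, and a standard argument shows $E$ is a left identity for the first Arens product on $\ell^{1}(S)^{**}$. The canonical quotient map $q:\ell^{1}(S)^{**}\to F(\ell^{1}(S))$ is surjective (by Hahn--Banach applied to the closed subspace $WAP(\ell^{1}(S)^{*})\subseteq\ell^{1}(S)^{*}$) and is an algebra homomorphism by the very definition of the product on $F(\ell^{1}(S))$; hence $q(E)$ is a left identity for $F(\ell^{1}(S))$, which in particular is a left approximate identity. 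With (i) and (ii) in place, Theorem~\ref{T2.7} yields the claim.

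The technical point I expect to require the most care is verifying that $\hat{\varphi}_{0}$ is genuinely multiplicative for the inherited first Arens product on $F(\ell^{1}(S))$, and that the quotient $q$ really is an algebra homomorphism from $(\ell^{1}(S)^{**},\Box)$ to $F(\ell^{1}(S))$; when both facts are invoked cleanly from the Runde framework the proof is essentially a one-line application of Theorem~\ref{T2.7}, but spelling them out requires tracing a short chain of adjoints through $WAP(\ell^{1}(S)^{*})$.
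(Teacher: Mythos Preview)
Your proposal is correct and follows the same route as the paper: reduce to Theorem~\ref{T2.7} by showing $F(\ell^{1}(S))$ has a unit (the paper simply cites \cite[Lemma~2.9]{Shari:17} for this, rather than sketching the bidual/quotient argument you outline) and by lifting a character $\varphi\in WAP(\ell^{\infty}(S))$ to a $wk^{*}$-continuous character on $F(\ell^{1}(S))$ via evaluation. The only cosmetic differences are that the paper uses $\varphi_{n_{0}}(a)=\sum_{i\geq n_{0}}a_{i}$ (your augmentation is the case $n_{0}=1$) and spells out the multiplicativity check for $\tilde{\varphi}$ explicitly, which is precisely the short adjoint computation you flag as the main technical point.
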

\begin{proof}
    Since $\ell^1(S)$ has a bounded approximate identity $(\delta_n)_{n\geq1}$ \cite[Example 4.10]{Dales:2010}, $F(\ell^1(S))$ has a unit \cite[Lemma 2.9]{Shari:17}. It follows that $Tri(F(\ell^{1}(S)),F(\ell^{1}(S)),F(\ell^{1}(S)))$ has a unit. Fixed $n_{0}\in{\mathbb{N}}$. Consider the character $\varphi_{n_{0}}:\ell^1(S)\rightarrow\mathbb{C}$ defined by $\varphi_{n_{0}}(\sum\limits_{i=1}^\infty a_{i}\delta_{i})=\sum\limits_{i=n_0}^{\infty}a_{i}$. It is easy to see that $a\cdot\varphi_{n_0}=\varphi_{n_0}(a)\varphi_{n_0}$ for every $a\in{\ell^1(S)}$. Let $\{a_{n}\}$ be a bounded sequence in $\ell^1(S)$. Since $\varphi_{n_0}$ is a bounded linear functional on $\ell^1(S)$, $\{\varphi_{n_0}({a}_{n})\}$ is a bounded sequence in $\mathbb{C}$. So there exists a convergence subsequence $\{\varphi_{n_0}({a}_{{n}_{k}})\}$ in  $\mathbb{C}$. Thus $\{\varphi_{n_0}({a}_{{n}_{k}})\}\varphi_{n_0}$ converges in $\ell^\infty(S)$ with the norm $\Vert\cdot\Vert_\infty$. Then ${a_{{n}_{k}}}\cdot\varphi_{n_0}$ converges weakly in $\ell^\infty(S)$. Hence the map $\ell^1(S)\rightarrow\ell^\infty(S)$, $a\mapsto a\cdot\varphi_{n_0}$ is weakly compact. So $\varphi_{n_0}\in{WAP(\ell^\infty(S))}$ \cite[Lemma 5.9]{Choi:2014}. Define $\tilde{\varphi}:F(\ell^1(S))\longrightarrow\mathbb{C}$ by $\tilde{\varphi}(X)=X(\varphi_{n_0})$ for every $X\in{F(\ell^1(S))}$. We claim that $\tilde{\varphi}$ is a $wk^*$-continuous character on $F(\ell^1(S))$. For every $X,Y\in{F(\ell^1(S))}$ we have
    \begin{equation*}
    \tilde{\varphi}(X\square Y)=X\square Y(\varphi_{n_0})=\langle Y\cdot\varphi_{n_0},X\rangle,
    \end{equation*}
    and also
    \begin{equation*}
    \langle a,Y\cdot\varphi_{n_0}\rangle=\langle\varphi_{n_0}\cdot a,Y\rangle=\langle\varphi_{n_0}(a)\varphi_{n_0},Y\rangle=\tilde{\varphi}(Y)\varphi_{n_0}(a)\qquad(a\in{\ell^1(S)}).
    \end{equation*}
    So $\tilde{\varphi}(X\square Y)=\langle\tilde{\varphi}(Y)\varphi_{n_0},X\rangle=\tilde{\varphi}(X)\tilde{\varphi}(Y)$. Let $(X_\alpha)$ be a net in $F(\ell^1(S))$ and let $X_0$ be an element in $F(\ell^1(S))$ such that $X_\alpha\overset{wk^*}{\longrightarrow}X_0$. Since $\varphi_{n_0}\in{WAP(\ell^\infty(S))}$, we have $X_\alpha(\varphi_{n_0}){\longrightarrow}X_0(\varphi_{n_0})$. Using Theorem \ref{T2.7}, $Tri(F(\ell^{1}(S)),F(\ell^{1}(S)),F(\ell^{1}(S)))$ is not approximately Connes-biprojective.
\end{proof}

\section{Application to some Banach algebras related to a locally compact group}
Let $G$ be a locally compact group. It is well-known that the measure algebra  $M(G)$ is a dual Banach algebra \cite[Example 4.4.2]{Runde:2002}.
\begin{thm}
For a locally compact group $ G $, the followings are equivalent:
\item[(i)] $ G $ is amenable,
\item[(ii)] $ M(G) $ is approximately Connes-biprojective.
\end{thm}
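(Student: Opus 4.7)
The plan is to prove both implications by routing through notions already established in the paper and in the cited literature, rather than attempting any direct homological construction for $M(G)$.

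For the direction (i) $\Rightarrow$ (ii), I would invoke Runde's classical theorem that $G$ is amenable if and only if $M(G)$ is Connes amenable. Since $M(G)$ is unital (with identity $\delta_e$), the Shirinkalam--Sahami result cited in the introduction (\cite{Shi:2016}) then upgrades Connes amenability to Connes-biprojectivity of $M(G)$. Finally, as observed immediately after the definition of approximate Connes-biprojectivity in Section~1, every Connes-biprojective dual Banach algebra is approximately Connes-biprojective, giving (ii).

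For the direction (ii) $\Rightarrow$ (i), the first step is automatic: since $M(G)$ has the identity $\delta_e$, Theorem \ref{t2.2}(i) applied to $M(G)$ yields that $M(G)$ is Johnson pseudo-Connes amenable. So the problem reduces to showing that Johnson pseudo-Connes amenability of $M(G)$ forces $G$ to be amenable. I would appeal to the characterization of Johnson pseudo-Connes amenability of measure algebras from \cite{Sha:17}, where this equivalence between Johnson pseudo-Connes amenability of $M(G)$ and amenability of $G$ is established (its proof follows Runde's strategy: extract from the defining net $(M_\alpha)$ in $(M(G)\hat{\otimes}M(G))^{**}$ a translation-invariant mean on a suitable subspace of $L^\infty(G)$, using $wk^*$-continuity of translations on $M(G)$).

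The main obstacle in this plan is the direction (ii) $\Rightarrow$ (i): the step from approximate Connes-biprojectivity to Johnson pseudo-Connes amenability is cheap and packaged in Theorem \ref{t2.2}(i), but the step from Johnson pseudo-Connes amenability of $M(G)$ back to amenability of $G$ is the substantive one. It is handled by citing \cite{Sha:17}; if one wanted a self-contained argument, one would have to carry out the averaging/mean-extraction argument directly, which is exactly the kind of computation Runde performs in the Connes-amenable case, adapted to the unbounded-net setting. No new idea beyond that is expected to be required, since the reverse implication (i) $\Rightarrow$ (ii) is essentially formal given Runde's theorem and \cite{Shi:2016}.
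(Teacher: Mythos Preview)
Your proposal is correct and matches the paper's own proof essentially line for line: for (i) $\Rightarrow$ (ii) the paper invokes Runde's theorem \cite[Theorem 5.4]{Runde:2003} and then \cite[Theorem 2.2]{Shi:2016} to get Connes-biprojectivity, hence approximate Connes-biprojectivity; for (ii) $\Rightarrow$ (i) it applies Theorem \ref{t2.2}(i) (using unitality of $M(G)$) and then cites \cite[Proposition 3.1]{Sha:17} to conclude amenability of $G$. Your identification of the (ii) $\Rightarrow$ (i) step as the substantive one, with the work delegated to \cite{Sha:17}, is exactly how the paper handles it.
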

\begin{proof}
 (i) $\Rightarrow$ (ii) : If $ G $ is amenable, then $ M(G) $ is Connes amenable \cite[Theorem 5.4]{Runde:2003}. Applying \cite[Theorem 2.2]{Shi:2016}, $ M(G) $ is Connes-biprojective. So $ M(G) $ is approximately Connes-biprojective.\\
 (ii) $\Rightarrow$ (i) : If $ M(G) $ is approximately Connes-biprojective, then by Theorem \ref{t2.2} (i), $ M(G) $ is Johnson pseudo Connes-amenable. It follows that $ G $ is amenable \cite[Proposition 3.1]{Sha:17}.
\end{proof}
 Zhang showed that the Banach algebra $\ell^{2}(S)$ with the pointwise multiplication is approximately biprojective but it is not biprojective \cite[\textsection2]{zhan:99}. We extend this example to the approximately Connes-biprojective case.
\begin{thm}\label{ex3.1}
Let $S$ be an infinite set. Then $\ell^{2}(S)$ is approximately Connes-biprojective but it is not Connes-biprojective.
\end{thm}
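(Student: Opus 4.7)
The plan is to establish the two halves of the theorem separately. For approximate Connes-biprojectivity, I note first that $\ell^2(S)$ is a Hilbert space, hence reflexive and automatically a dual Banach algebra with itself as predual (the pointwise multiplication is separately $wk^\ast$-$wk^\ast$ continuous since bounded operators on a reflexive space are weakly continuous). Zhang already proved \cite[\textsection2]{zhan:99} that $\ell^2(S)$ with pointwise product is approximately biprojective, and by the remark following the Definition of approximate Connes-biprojectivity, every approximately biprojective dual Banach algebra is approximately Connes-biprojective: composing Zhang's net $(\rho_\alpha)$ of bimodule morphisms $\mathcal{A}\to\mathcal{A}\hat\otimes\mathcal{A}$ with the canonical bimodule map $\tau:\mathcal{A}\hat\otimes\mathcal{A}\to(\sigma wc(\mathcal{A}\hat\otimes\mathcal{A})^\ast)^\ast$ delivers the required net, since $\pi_{\sigma wc}\circ\tau=\pi_\mathcal{A}$ as in equation (\ref{e2.8}).

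For failure of Connes-biprojectivity, I argue by contradiction. Write $X=\mathcal{A}\hat\otimes\mathcal{A}$. Because $\mathcal{A}=\ell^2(S)$ is reflexive, every bounded linear map out of $\mathcal{A}$ is weak-weak continuous, so $\sigma wc(X^\ast)=X^\ast$, whence $(\sigma wc(X^\ast))^\ast=X^{\ast\ast}$ and $\pi_{\sigma wc}=\pi_\mathcal{A}^{\ast\ast}$. Assume $\rho:\mathcal{A}\to X^{\ast\ast}$ is a bounded $\mathcal{A}$-bimodule morphism with $\pi_\mathcal{A}^{\ast\ast}\circ\rho=id_\mathcal{A}$. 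For each $s\in S$, $\delta_s$ is an idempotent, so $\rho(\delta_s)=\delta_s\cdot\rho(\delta_s)\cdot\delta_s$. The bounded operator $T_s:X\to X$ defined by $T_s(u)=\delta_s\cdot u\cdot\delta_s$ is rank-one: a direct computation on elementary tensors gives $T_s(u)=(\varphi_s\otimes\varphi_s)(u)\cdot(\delta_s\otimes\delta_s)$, where $\varphi_s(f)=f(s)$. Consequently the biadjoint satisfies $T_s^{\ast\ast}(F)=F(\varphi_s\otimes\varphi_s)\cdot(\delta_s\otimes\delta_s)$, which always lands in the canonical image of $X$ inside $X^{\ast\ast}$. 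Therefore $\rho(\delta_s)=c_s(\delta_s\otimes\delta_s)$ for some scalar $c_s$, and applying $\pi_\mathcal{A}^{\ast\ast}$ forces $c_s=1$, so $\rho(\delta_s)=\delta_s\otimes\delta_s\in X\subseteq X^{\ast\ast}$.

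Boundedness of $\rho$ now produces the required contradiction. For any $f=\sum_{s\in S}c_s\delta_s\in\ell^2(S)$, $\rho(f)=\sum_s c_s(\delta_s\otimes\delta_s)$ as a norm-convergent series in $X^{\ast\ast}$; since the partial sums lie in $X$ and $X$ is isometrically embedded as a closed subspace of $X^{\ast\ast}$, the series must actually converge in $X=\ell^2(S)\hat\otimes\ell^2(S)$ in the projective norm. Under the standard isometric identification of $\ell^2\hat\otimes\ell^2$ with the trace class operators on $\ell^2$, the projective norm of a diagonal element $\sum_{n\le N}c_n\delta_n\otimes\delta_n$ equals $\sum_{n\le N}|c_n|$; so choosing any injection $n\mapsto s_n$ of $\mathbb{N}$ into the infinite set $S$ together with $c_{s_n}=1/n$ produces $f\in\ell^2(S)$ whose corresponding partial sums $\sum_{n\le N}(1/n)\delta_{s_n}\otimes\delta_{s_n}$ have projective norm tending to $\infty$, so are not Cauchy in $X$. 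This contradiction completes the proof. The main technical point is the pointwise identity $\rho(\delta_s)=\delta_s\otimes\delta_s$, whose verification exploits the fact that $T_s$ has rank one so that its biadjoint lands inside the canonical copy of $X$; this effectively reduces Connes-biprojectivity to biprojectivity and thereby unlocks Zhang's $\ell^1$-versus-$\ell^2$ obstruction.
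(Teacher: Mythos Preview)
Your proof is correct and follows the same overall strategy as the paper: approximate Connes-biprojectivity comes from Zhang's result, and failure of Connes-biprojectivity is obtained by first forcing $\rho(\delta_s)=\delta_s\otimes\delta_s$ from the idempotence of $\delta_s$ and then extracting an $\ell^1$-versus-$\ell^2$ obstruction. The execution, however, differs in each step and is somewhat cleaner than the paper's. You begin with the blanket observation that reflexivity of $\mathcal{A}$ forces $\sigma wc(X^\ast)=X^\ast$ (since $wk^\ast=wk$ on $\mathcal{A}$ and bounded maps are weak--weak continuous), so that Connes-biprojectivity collapses to a statement about $X^{\ast\ast}$; the paper instead keeps $\sigma wc$ in play and verifies separately, via \cite[Proposition~6.6]{Daws:2007}, that the particular functional $I\in X^\ast$ lies in $\sigma wc(X^\ast)$. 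For the identity $\rho(\delta_s)=\delta_s\otimes\delta_s$, you use that $u\mapsto\delta_s\cdot u\cdot\delta_s$ has rank one and hence its biadjoint lands in the canonical copy of $X$; the paper reaches the same conclusion by a $wk^\ast$-density argument with approximating nets and a scalar limit $\lambda_\alpha\to1$. Finally, for the contradiction, the paper pairs $\rho(x)$ against the identity operator $I$ to obtain $\sum_i\beta_i$ convergent for every $\ell^2$-sequence; you instead observe that the series $\sum c_s(\delta_s\otimes\delta_s)$ must converge in the projective norm and compute that norm via the trace-class identification. Your route avoids the external citation to Daws and makes the reduction to ordinary (non-Connes) biprojectivity explicit, at the cost of invoking the isometry $\ell^2\hat\otimes\ell^2\cong S_1(\ell^2)$.
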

\begin{proof}
Since $\mathcal{A}=\ell^2(S)$ is a Hilbert space, one can see that it is a dual Banach algebra. We know that  $\mathcal{A}$ is approximately biprojective \cite[\textsection2]{zhan:99}. Thus $\mathcal{A}$ is approximately Connes-biprojective. We claim that $\mathcal{A}$ is not Connes-biprojective.  Suppose conversely that there exists a  continuous $\mathcal{A}$-bimodule morphism  $\rho :\mathcal{A}\rightarrow(\sigma wc(\mathcal{A}\hat{\otimes}\mathcal{A})^*)^*$  such that $\pi_{\sigma wc}\circ\rho=id_{\mathcal{A}}$.  Composing the canonical inclusion
    map $\mathcal{A}\hat{\otimes}\mathcal{A}\hookrightarrow(\mathcal{A}\hat{\otimes}\mathcal{A})^{\ast\ast}$ with the quotient map $q:(\mathcal{A}\hat{\otimes}\mathcal{A})^{\ast\ast}\rightarrow(\sigma\omega{c}({\mathcal{A}}\hat{\otimes}{\mathcal{A}})^{\ast})^{\ast}$ as in the Remark \ref{R2.1}, we obtain a continuous $\mathcal{A}$-bimodule map $\tau:\mathcal{A}\hat{\otimes}\mathcal{A}\longrightarrow(\sigma\omega{c}({\mathcal{A}}\hat{\otimes}{\mathcal{A}})^{\ast})^{\ast}$ which has a $wk^{*}$-dense range. So there exists a bounded net $(u_{\alpha})$ in $(\mathcal{A}\hat{\otimes}\mathcal{A})$ such that
    \begin{equation*}
\rho(e_i)=wk^\ast\mbox{-}\lim_{\alpha}\tau(u_{\alpha})=wk^\ast\mbox{-}\lim_{\alpha}(\hat{u}_{\alpha})\vert_{\sigma\omega{c}({\mathcal{A}}\hat{\otimes}{\mathcal{A}})^{\ast}},
    \end{equation*}
    where for every $i\in{S}$,  $e_i$ is an element of $\mathcal{A}$ which is equal to $1$ at $i$ and $0$ elsewhere.
Similar argument as in \cite[Proposition 2.13]{Shari:17} leads us to a contradiction. Since $\rho(e_i)=e_i\cdot\rho(e_i)\cdot e_i$, one can see that
\begin{equation*}
\begin{split}
\rho(e_i)&=wk^{\ast}\hbox{-}\lim\limits_{\alpha}e_i\cdot\tau(u_{\alpha})\cdot e_i=wk^{\ast}\hbox{-}\lim\limits_{\alpha}\tau(e_i\cdot u_{\alpha}\cdot e_i)\\&=wk^{\ast}\hbox{-}\lim\limits_{\alpha}\tau(\lambda_{\alpha}{e}_i\otimes{e}_i)=wk^{\ast}\hbox{-}\lim\limits_{\alpha}\lambda_{\alpha}\tau({e}_i\otimes{e}_i),
\end{split}
\end{equation*}
  for some $(\lambda_{\alpha})\subseteq{\mathbb{C}}$. Since $\pi_{\sigma wc}$ is $wk^*$-continuous, 
\begin{equation*}
\begin{split}
{e}_i=\pi_{\sigma wc}\circ\rho(e_i)&=wk^{\ast}\hbox{-}\lim\limits_{\alpha}\lambda_{\alpha}\pi_{\sigma wc}((\widehat{{e}_i\otimes{e}_i})\vert_{\sigma{wc}({\mathcal{A}}\hat{\otimes}{\mathcal{A}})^{\ast}})\\&=wk^{\ast}\hbox{-}\lim\limits_{\alpha}\lambda_{\alpha}\pi({e}_i\otimes{e}_i)=wk^{\ast}\hbox{-}\lim\limits_{\alpha}\lambda_{\alpha}{e}_i.
\end{split}
\end{equation*}
So  $\lambda_{\alpha}\overset{\vert\cdot\vert}{\longrightarrow}1$ in $\mathbb{C}$. So $\rho(e_i)=\tau({e}_i\otimes{e}_i)$. Consider the identity operator $I:\mathcal{A}\rightarrow\mathcal{A}$, which can be viewed as an element of $(\mathcal{A}\hat{\otimes}\mathcal{A})^*$ \cite[\textsection3]{Daws:2006}. Define the map $\Phi:\mathcal{A}\hat{\otimes}\mathcal{A}\longrightarrow\mathcal{A}$ by $\Phi(a\otimes{b})=aI(b)$. We claim that $\Phi$ is weakly compact. We know that the unit ball of $\mathcal{A}\hat{\otimes}\mathcal{A}$ is the closure of the convex hull of $\{a\otimes{b} \ \colon \ \Vert{a}\Vert=\Vert{b}\Vert\leq1\}$. Since in a reflexive Banach space every bounded set
is relatively weakly compact, the set $\{ab \ \colon \ \Vert{a}\Vert=\Vert{b}\Vert\leq1\}$ is relatively weakly compact. So $\Phi$ is weakly compact. Applying \cite[Proposition 6.6]{Daws:2007}, we have $I\in{\sigma wc(\mathcal{A}\hat{\otimes}\mathcal{A})^*}$. If $x=\sum\limits_{i\in{S}}\beta_{i}e_{i}$ is an element in $\mathcal{A}$, then $\rho(x)=\sum\limits_{i\in{S}}\beta_{i}(\widehat{{e}_i\otimes{e}_i}\vert_{\sigma{wc}({\mathcal{A}}\hat{\otimes}{\mathcal{A}})^{\ast}})$. So
\begin{equation}\label{e3.1}
\langle{I},\rho(x)\rangle=\sum\limits_{i\in{S}}\beta_{i}\langle I,{e}_i\otimes{e}_i\rangle=\sum\limits_{i\in{S}}\beta_{i}\langle I(e_i),e_i\rangle=\sum\limits_{i\in{S}}\beta_{i}.
\end{equation}
We have 
\begin{equation*}
\vert\langle{I},\rho(x)\rangle\vert\leq\Vert{I}\Vert\Vert\rho\Vert\Vert{x}\Vert<\infty.
\end{equation*}
So by (\ref{e3.1}), $\sum\limits_{i\in{S}}\beta_{i}$ converges for every $x=\sum\limits_{i\in{S}}\beta_{i}e_{i}$ in ${\mathcal{A}}$. Then $\ell^2(S)\subset\ell^1(S)$, which is a contradiction with \cite[Proposition 6.11]{Fol:99}.
\end{proof}
Let $G$ be a locally compact group. Rickert showed that $L^2(G)$
is a Banach algebra with convolution if and only if $G$ is compact \cite{Rick:68}.

The proof of the following Corollary is similar to the \cite[Theorem 2.16]{Shari:17} so we omit it:
\begin{cor}
Let $G$ be an infinite commutative compact group. Then $L^2(G)$ with convolution is approximately Connes-biprojective, but it is not Connes-biprojective.
\end{cor}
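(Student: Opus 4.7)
The plan is to reduce the statement to Theorem \ref{ex3.1} via Fourier analysis on $G$. Since $G$ is a compact abelian group, its Pontryagin dual $\hat{G}$ is a discrete abelian group, and by the Plancherel theorem the Fourier transform provides an isometric isomorphism of Hilbert spaces $\mathcal{F}:L^2(G)\to\ell^2(\hat{G})$. A direct computation (using that $\widehat{f\ast g}=\hat{f}\cdot\hat{g}$) shows that $\mathcal{F}$ intertwines convolution on $L^2(G)$ with pointwise multiplication on $\ell^2(\hat{G})$, so $\mathcal{F}$ is in fact an isometric Banach algebra isomorphism. Moreover, since $G$ is infinite and compact abelian, $\hat{G}$ is an infinite discrete set, so Theorem \ref{ex3.1} applies to $\ell^2(\hat{G})$.

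The next step is to verify that both approximate Connes-biprojectivity and Connes-biprojectivity transfer along isometric isomorphisms of dual Banach algebras. Given an isometric Banach algebra isomorphism $\Phi:\mathcal{A}\to\mathcal{B}$, the induced map $\Phi\hat{\otimes}\Phi:\mathcal{A}\hat{\otimes}\mathcal{A}\to\mathcal{B}\hat{\otimes}\mathcal{B}$ is an isometric isomorphism compatible with the bimodule actions and with the product maps $\pi_{\mathcal{A}},\pi_{\mathcal{B}}$. Taking adjoints, one checks that $(\Phi\hat{\otimes}\Phi)^{*}$ carries $\sigma wc(\mathcal{B}\hat{\otimes}\mathcal{B})^{*}$ onto $\sigma wc(\mathcal{A}\hat{\otimes}\mathcal{A})^{*}$ (the $\sigma wc$ condition is defined purely in module-theoretic terms and hence is preserved), and the resulting biadjoint intertwines $\pi_{\sigma wc}$ on the two sides. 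Thus any net of $\mathcal{A}$-bimodule morphisms $\rho_{\alpha}:\mathcal{A}\to(\sigma wc(\mathcal{A}\hat{\otimes}\mathcal{A})^{*})^{*}$ with $\pi_{\sigma wc}\circ\rho_{\alpha}(a)\to a$ yields, by conjugation with $\Phi$, a corresponding net on $\mathcal{B}$, and likewise for a bounded splitting realizing Connes-biprojectivity.

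Combining these two steps, Theorem \ref{ex3.1} applied to $S=\hat{G}$ gives that $\ell^2(\hat{G})$ with pointwise product is approximately Connes-biprojective but not Connes-biprojective; transporting through $\mathcal{F}$ gives the same conclusion for $L^2(G)$ with convolution. The only mildly subtle point — and where I would be most careful — is the verification that the functor $\mathcal{A}\mapsto(\sigma wc(\mathcal{A}\hat{\otimes}\mathcal{A})^{*})^{*}$ behaves naturally under isometric isomorphisms; but this is a routine compatibility check that should proceed exactly as in the proof of the analogous Theorem 2.16 of \cite{Shari:17} referenced by the authors, which is why the full write-up can reasonably be omitted.
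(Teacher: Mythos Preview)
Your proposal is correct and is essentially the approach the paper has in mind: the paper omits the argument entirely, pointing to \cite[Theorem 2.16]{Shari:17}, and the reduction you describe---identifying $(L^2(G),\ast)$ with $(\ell^2(\hat G),\cdot)$ via the Plancherel isomorphism and then invoking Theorem~\ref{ex3.1}---is precisely what that reference does in the analogous setting. The only point worth making explicit is that the Fourier transform is not merely an isometric algebra isomorphism but a \emph{dual Banach algebra} isomorphism (it is $wk^*$--$wk^*$ continuous because both algebras are Hilbert spaces with themselves as preduals), so that the $\sigma wc$ constructions genuinely match up; once this is noted, the transfer of (approximate) Connes-biprojectivity is indeed routine.
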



\end{document}